\def\torus{{\bf T}}
\def\integer{{\bf Z}}
\def\bbZ{{\bf Z}}
\def\reals{{\bf R}}
\def\complex{{\bf C}}
\def\alg{{\mathcal A}}
\def\bdd{{\mathcal B}}
\def\cpt{{\mathcal K}}
\def\cS{{\mathcal S}}
\def\cL{{\mathcal L}}
\def\cH{{\mathcal H}}
\def\CT{{\textrm CT}}
\def\ft#1{\widehat{#1}}
\newcommand\Aut{\textrm{Aut}}
\newcommand\Ca{$C^*$-algebra}
\newcommand{\thm}[1]{\advance\count32 by 1\bigskip\noindent{\bf #1 \the\count31.\the\count32.}}
\theoremstyle{plain}
\newtheorem{theorem}{Theorem}[section]
\newtheorem{lemma}[theorem]{Lemma}
\newtheorem{proposition}[theorem]{Proposition}
\newtheorem{corollary}[theorem]{Corollary}
\theoremstyle{definition}
\newtheorem{definition*}{Definition}
\newtheorem{example}[theorem]{Example}
\theoremstyle{remark}
\newtheorem{remark}[theorem]{Remark}
\newtheorem{remarks*}{Remarks}
\begin{document}

\title[Parametrised strict deformation quantization of $C^*$-algebras]
{Parametrised strict deformation quantization of $C^*$-bundles and Hilbert $C^*$-modules}

\author[KC Hannabuss]{Keith C. Hannabuss}

\address[Keith Hannabuss]{
Mathematical Institute, \\
24-29 St. Giles', \\
Oxford, OX1 3LB, and \\
Balliol College, \\
Oxford, OX1 3BJ,\\
England}
\email{kch@balliol.ox.ac.uk}

\author[V Mathai]{Varghese Mathai}

\address[Varghese Mathai]{
Department of Pure Mathematics,\\
University of Adelaide,\\
Adelaide, SA 5005, \\
Australia}
\email{mathai.varghese@adelaide.edu.au}

\maketitle

\begin{abstract}
In this paper, we review the parametrised strict deformation quantization
of $C^*$-bundles obtained in a previous paper, and give more examples and applications
of this theory.
In particular, it is used here to classify $H_3$-twisted
noncommutative torus bundles over a locally compact space. This is
extended to the case of general torus bundles and their
parametrised strict deformation quantization. Rieffel's basic construction 
of an algebra deformation can be mimicked to deform a monoidal category, 
which deforms not only algebras but also modules. As a special case, we consider the
parametrised strict deformation quantization of Hilbert $C^*$-modules
over $C^*$-bundles with fibrewise torus action.
\end{abstract}

\bigskip
%\maketitle

\begin{center}
\it Dedicated to Alan Carey, on the occasion of his 60$\,^{th}$ birthday
\end{center}

\bigskip
%\maketitle

\section*{Introduction}
Parametrised strict deformation quantization of $C^*$-bundles
was introduced by the authors in an earlier paper \cite{HM}, as a generalization
of Rieffel's strict deformation quantization of $C^*$-algebras \cite{Rieffel1, Rieffel2}.
The particular version of Rieffel's theory that was generalized in \cite{HM} was due to
Kasprzak \cite{Kasp} and Landstad \cite{Land1, Land2}.
The results in \cite{HM} were used to classify noncommutative principal torus bundles as defined by
Echterhoff, Nest, and Oyono-Oyono \cite{ENOO}, as parametrised strict deformation quantizations
of principal torus bundles.
These arise as special cases of the continuous fields of noncommutative tori that appear
as T-duals to spacetimes with background H-flux in \cite{MR, MR2}.

We review in \S\ref{sect:param}, the construction of the parametrized deformation quantization
from our earlier paper \cite{HM} and give more examples in sections \ref{sect:param}, \ref{sect:examples}, and applications 
in section \ref{sect:classify}, of this construction.
For example, we generalize the main
application of our results in \cite{HM} (as well as those in
\cite{ENOO}). 
More precisely, suppose that $A(X)$ is a $C^*$-bundle over a
locally compact space $X$ with a fibrewise action of a torus
$T$, and that $A(X) \rtimes T \cong \CT(X, H_3)$, where $\CT(X,
H_3)$ is a continuous trace algebra with spectrum $X$ and
Dixmier-Douady class $H_3 \in H^3(X; \integer)$. We call such
$C^*$-bundles, $H_3$-twisted NCPT bundles over $X$. Our first
main result is that any $H_3$-twisted NCPT bundle $A(X)$ is
equivariantly Morita equivalent to the parametrised deformation
quantization of the continuous trace algebra $$\CT(Y,
q^*(H_3))_{\sigma},$$ where $q: Y \to X$ is a principal torus
bundle with Chern class  $H_2 \in H^2(X;
H^{1}(T;\integer))$, and $\sigma\in C_b(X, Z^2(\ft{T},\torus))$
a  defining deformation such that $[\sigma] = H_1 \in H^1(X;
H^{2}(T;\integer))$. This enables us to prove in section \ref{sect:T-duality} that the
continuous trace algebra $${\textrm CT}(X\times T, H_1 + H_2
+H_3),$$ with Dixmier-Douady class $H_1+H_2+H_3 \in
H^3(X \times T; \integer), \,\,H_j \in H^j(X; H^{3-j}(T;\integer))$,
has an action of the vector group $V$ that is the universal
cover of the torus $T$, and covering the $V$-action on $X \times T$.  Moreover the crossed product can be
identified up to $T$-equivariant Morita equivalence,
$$ \CT(X\times T, H_1 + H_2 +H_3) \rtimes V \cong \CT(Y, q^*(H_3))_{\sigma}.$$
That is, the T-dual of $(X\times T, H_1 + H_2 +H_3)$ is the parametrised strict deformation quantization
of $(Y, q^*(H_3))$ with deformation parameter $\sigma,\, [\sigma]=H_1$. From this we obtain the explicit
dependence of the K-theory of $\CT(Y, q^*(H_3))_{\sigma}$ in terms of the deformation parameter.

In section 6, we extend this to the case of general torus
bundles and their noncommutative parametrised strict
deformation quantizations.
%%%%%%%%%%%%%%Insert%%%%%%%%%%%%%%%%%%%%
It has proved a useful principle that deformation of an algebraic structure should be
viewed within the context of a deformation of an appropriate  category, see, for example,
\cite[Introduction]{B+L}.  
Following this philosophy, we show in Section 7 how Rieffel
deformations can be regarded as monoidal functors which allow
us to deform modules as well as algebras.
%%%%%%%%%%%end insert%%%%%%%%%%%%%%%%%%%%
 In the last section,  we construct the parametrised strict deformation quantization
of Hilbert $C^*$-modules over $C^*$-bundles with fibrewise
torus action directly.

\section{$C^*$-bundles and fibrewise smooth $*$-bundles}
\label{sec:prelim}
We begin by recalling the notion of $C^*$-bundles over $X$ and then introduce
the special
case of $H_3$-twisted noncommutative principal bundles. Then we discuss the fibrewise smoothing
of these, which is used in parametrised Rieffel deformation later on.

Let $X$ be a locally compact Hausdorff space and let $C_0(X)$ denote
the  {\Ca} of continuous functions on $X$ that vanish at infinity.
A $C^*$-\emph{bundle} $A(X)$ over $X$ in the sense of \cite{ENOO}
is  exactly a $C_0(X)$-algebra
in the sense of Kasparov \cite{Kas88}.  That is, $A(X)$ is a
{\Ca} together with a non-degenerate $*$-homomorphism
%$$\Phi_A: C_0(X)\to ZM(A(X)),$$
$$\Phi: C_0(X)\to ZM(A(X)),$$
called the {\emph{ structure map}}, where
$ZM(A)$ denotes the center of the multiplier algebra $M(A)$ of $A$.
The {\emph{ fibre}}   over  $x\in X$ is then  $A(X)_x=A(X)/I_x$, where
$$
I_x=\{\Phi(f)\cdot a;\, a\in A(X)\, {\textrm{and}}\, f\in C_0(X)\, {\textrm{such that}}\, f(x)=0\},
$$ 
and the canonical quotient map $q_x:A(X)\to
A(X)_x$ is called the {\emph{ evaluation map}}  at $x$.

Note that this definition does not require local triviality of the
bundle, or even for the fibres of the bundle to be isomorphic to one
another.

Let $G$ be a locally compact group. One says that there is a {\emph{
fibrewise action}} of $G$ on a $C^*$-bundle $A(X)$ if there is a
homomorphism $\alpha: G \longrightarrow  \Aut(A(X))$ which is
$C_0(X)$-linear in the sense that
$$
\alpha_g(\Phi(f) a) = \Phi(f)(\alpha_g(a)), \qquad \forall g\in G, \,
a\in A(X),\, f \in C_0(X).
$$
This means that $\alpha$ induces an action $\alpha^x$ on the fibre
$A(X)_x$ for all $x\in X$.

The first observation is that if
$A(X)$ is a $C^*$-algebra bundle over $X$ with a fibrewise action $\alpha$
of a {\emph{ Lie group}} $G$, then there is a canonical {\emph{ smooth $*$-algebra bundle}}
over $X$. We recall its definition from \cite{Co80}.
A vector $y\in A(X)$ is said to be a {\emph{ smooth vector}} if the map
$$
G \ni g \longrightarrow \alpha_g(y) \in A(X)
$$
is a smooth map from $G$ to the normed vector space $A(X)$. Then
$$
\alg^\infty(X) = \{ y\in A(X)\, | \, y\, {\textrm{ is  a smooth vector}} \}
$$
is a $*$-subalgebra of $A(X)$ which is norm dense in $A(X)$. Since $G$
acts {\emph{ fibrewise}} on $A(X)$, it follows that $\alg^\infty(X) $ is again a
$C_0(X)$-algebra which is {\emph{ fibrewise smooth}}.

Let $T$ denote the torus of dimension $n$.  We define a  {\emph{ $H_3$-twisted noncommutative principal  $T$-bundle}}
(or {\emph{ $H_3$-twisted NCP $T$-bundle}}) over $X$ to be a separable  $C^*$-bundle
$A(X)$  together with a fibrewise  action $\alpha:T\to\Aut(A(X))$
such that  there is a Morita equivalence,
$$A(X)\rtimes_{\alpha}T\cong \CT(X, H_3),$$
as $C^*$-bundles over $X$, where $ \CT(X, H_3)$ denotes the
continuous trace $C^*$-algebra with spectrum equal to $X$ and
Dixmier-Douady class equal to $H_3 \in H^3(X, \integer)$. %misprint Dixmier

If $A(X)$ is a $H_3$-twisted NCP $T$-bundle over $X$, then we call $\alg^\infty(X)$
a {\emph{ fibrewise smooth $H_3$-twisted noncommutative principal  $T$-bundle}}
(or {\emph{ fibrewise smooth $H_3$-twisted NCP $T$-bundle}}) over $X$. In this paper, we are able
to give a complete classification of fibrewise smooth $H_3$-twisted NCP $T$-bundles over $X$
via a parametrised version of Rieffel's theory of strict deformation quantization as derived in \cite{HM}.

\section{Parametrised strict deformation quantization of $C^*$-bundles with fibrewise action of $T$}
\label{sect:param}

In a nutshell, parametrised strict deformation quantization
is a functorial extension of Rieffel's strict deformation quantization
from algebras $A$, to $C(X)$-algebras $A(X)$, and in particular
to $C^*$-bundles over $X$.
Unlike Rieffel's deformation theory \cite{Rieffel1}
the version in \cite{HM} starts
with multipliers via the Landstad--Kasprzak
approach.  Here we review the theory for $C^*$-bundles over $X$.

Let $A(X)$ be a $C^*$-algebra bundle over $X$ with a fibrewise action $\alpha$
of a torus $T$. Let  $\sigma\in C_b(X, Z^2(\ft{T},\torus))$ be a deformation parameter.
Then we define the parametrised strict deformation quantization of $A(X)$, denoted
$A(X)_\sigma$ as follows. We have the direct sum decomposition,
$$
\begin{array}{lcl}
A(X)  &\cong & \widehat\bigoplus_{\chi \in \hat T} A(X)_\chi
 \\
\phi(x) &= &\sum_{\chi \in \hat T} \phi_\chi(x)
\end{array}
$$
for $x\in X$, where for $\chi\in\widehat{T}$,
$$
A(X)_\chi:= \left\{a\in A(X) \mid
  \alpha_t(a)=\chi(t)\cdot a\,\,
 \,\,\,   \forall\, t\in T \right\}.
$$
Since $T$ acts by $\star$-automorphisms, we have
\begin{equation}
  \label{eq:Fell_bundle_algebra}
  A(X)_\chi\cdot A(X)_\eta \subseteq A(X)_{\chi\eta}
  \quad {\textrm{and}} \quad A(X)_\chi^*=A(X)_{\chi^{-1}}
  \qquad  \forall\, \chi,\eta\in\widehat{T}.
\end{equation}
Therefore the spaces \(A(X)_\chi\) for \(\chi\in\widehat{T}\) form a
{Fell bundle}~\(A(X)\) over~\(\widehat{T}\)
(see~\cite{fell_doran}); there is no continuity condition
because~\(\widehat{T}\) is discrete.
The completion of the direct sum is explained as follows.
The representation theory of $T$ shows that
\(\bigoplus_{\chi\in\widehat{T}} A(X)_\chi = A(X)^{alg}\) is a
$T$-equivariant dense subspace
of~\(A(X)\), where $T$ acts on $ A(X)_\chi$ as follows:
$\hat \alpha_t(\phi_\chi(x)) = \chi(t) \phi_\chi(x)$ for all $t\in T, \, x\in X$.
Then $ \widehat{A(X)^{alg}} = \widehat\bigoplus_{\chi \in \hat T} A(X)_\chi$
is the completion in the $C^*$-norm of $A(X)$, and is isomorphic to $A(X)$.

The product of elements in $A(X)^{alg} \subset A(X)$ then also decompose as,
$$
(\phi \psi)_\chi(x)= \sum_{\chi_1\chi_2=\chi} \phi_{\chi_1}(x)\psi_{\chi_2}(x)
$$
for $\chi_1,\chi_2, \chi \in \widehat T$. The product can be deformed by setting
$$
(\phi\star_\sigma \psi)_\chi(x) = \sum_{\chi_1\chi_2=\chi} \phi_{\chi_1}(x)\psi_{\chi_2}(x)
\sigma(x; \chi_1, \chi_2)
$$
which is associative because of the
cocycle property of $\sigma$.

%We next describe the norm completion aspects. For $x\in X$, 
%let $\cH_x$ denote the universal Hilbert space representation 
%of the fibre $C^*$-algebra $A(X)_x$ which one obtains via the GNS theorem. 
%Let $\cH_1 = \int_X \cH_x dx$ denote the direct integral, which is 
%the universal Hilbert space representation of $A(X)$.
%By considering instead
%the Hilbert space $\cH=\cH_1 \otimes L^2(T) \otimes \cH_2$, where $\cH_2$ is an infinite dimensional Hilbert space,
%where we note that every character of $T$ occurs with infinite multiplicity in $L^2(T) \otimes \cH_2$,
%we obtain a $T$-equivariant embedding of $A(X)$
%into $B(\cH)$. Consider the action of $A(X)^{alg}$ on $\cH$ given by the deformed product 
%$\star_\sigma$. That is, 
%$$
%(\phi\star_\sigma \psi)_\chi(x) = \sum_{\chi_1\chi_2=\chi} \phi_{\chi_1}(x)\psi_{\chi_2}(x)
%\sigma(x; \chi_1, \chi_2)
%$$
%for $\phi \in  A(X)^{alg}$ and $\psi \in \cH$.
%The operator norm completion of this action is the parametrised strict deformation quantization of $A(X)$,
%denoted by $A(X)_\sigma$.

We next describe the norm completion aspects. For $x\in X$, let $\cH_x$ denote the universal Hilbert space representation of the fibre $C^*$-algebra $A(X)_x$ which one obtains via the GNS theorem. Let $\cH_1 = \int_X \cH_x dx$ denote the direct integral, which is the universal Hilbert space representation of $A(X)$. By considering instead the Hilbert space $\cH=\cH_1 \otimes L^2(T) \otimes \cH_2$, where $\cH_2$ is an infinite dimensional Hilbert space, where we note that every character of $T$ occurs with infinite multiplicity in $L^2(T) \otimes \cH_2$, we obtain a $T$-equivariant embedding $\varpi:A(X) \to B(\cH)$. The equivariance means that
$$
\varpi(\phi(x)_\chi) = \varpi(\phi(x))_\chi.
$$
Now consider the action of $A(X)^{alg}$ on $\cH$ given by the deformed product $\star_\sigma$, that is, for $\phi \in A(X)^{alg}$ and $\Psi \in \cH$,
$$ (\phi\star_\sigma \Psi)_\chi(x) = \sum_{\chi_1\chi_2=\chi} \varpi(\phi_{\chi_1}(x))\Psi_{\chi_2}(x) \sigma(x; \chi_1, \chi_2).
$$
The operator norm completion of this action is the parametrised strict deformation quantization of $A(X)$, denoted by $A(X)_\sigma$.

We next consider a special case of this construction. Consider
a smooth fiber bundle of smooth manifolds,
\begin{equation}
\xymatrix{        
Z\ar[r]&Y\ar[d]^\pi\\ 
& X.
}
\end{equation}

Suppose there is a fibrewise action of a torus $T$ on $Y$. That
is, assume that there is an action of $T$ on $Y$ satisfying,
$$
\pi(t.y) = \pi(y), \qquad \forall\, t\in T, \, y\in Y.
$$
Let  $\sigma\in C_b(X, Z^2(\ft{T},\torus))$ be a deformation parameter.
$C_0(Y)$ is a $C^*$-bundle over $X$, and as above, form the
parametrised strict deformation quantization $C_0(Y)_\sigma$.

In particular, let $Y$ be a principal $G$-bundle over $X$, where
$G$ is a compact Lie group such that ${\textrm rank}(G) \ge 2$.
(e.g. $G= {\textrm SU}(n), \, n\ge 3$ or  $G= {\textrm U}(n), \, n\ge 2$).
Let  $T$ be a maximal torus in $G$ and 
$\sigma\in C_b(X, Z^2(\ft{T},\torus))$ be a deformation parameter.
Then $C_0(Y)$ is a $C^*$-bundle over $X$, and as above, form the
parametrised strict deformation quantization $C_0(Y)_\sigma$.

\section{Classifying $H_3$-twisted NCPT-bundles}
\label{sect:classify}

Here we prove another application of parametrised strict deformation quantization
cf. \S\ref{sect:param}, \cite{HM}.

Let $A(X)$ be a $H_3$-twisted NCPT-bundle over $X$. Consider
 the $C^*$-bundle over $X$, $A(X)\otimes_{C_0(X)} \CT(X, -H_3)$, which
has a fibrewise (diagonal) action of $T$, where $T$ acts trivially
on $\CT(X, -H_3)$. Then
$$
\begin{array}{lcl}
 \left(A(X)\otimes_{C_0(X)} \CT(X, -H_3)\right)\rtimes T 
&\cong&  \left(A(X) \rtimes T \right)\otimes_{C_0(X)} \CT(X, -H_3) \\
& \cong & \CT(X, H_3) \otimes_{C_0(X)} \CT(X, -H_3)\\
& \cong& C_0(X, \cpt).
\end{array}
$$

Therefore, $A(X)\otimes_{C_0(X)} \CT(X, -H_3)$ is a NCPT-bundle over $X$.
By the classification Theorem 5.1 \cite{HM}, (which in turn used the results of 
Echterhoff and Williams, \cite{EW}) we deduce that
$$
A(X)\otimes_{C_0(X)} \CT(X, -H_3) \cong C_0(Y)_\sigma.
$$
Therefore,
$$
A(X) \cong C_0(Y)_\sigma \otimes_{C_0(X)} \CT(X, H_3).
$$

\begin{lemma}\label{lem:H_3-twisted}
In the notation above,
$$
C_0(Y)_\sigma \otimes_{C_0(X)} \CT(X, H_3) \cong \CT(Y, q^*(H_3))_\sigma
$$
\end{lemma}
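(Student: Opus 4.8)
The plan is to reduce everything to the undeformed structural isomorphism
$$
\CT(Y, q^*(H_3)) \cong C_0(Y) \otimes_{C_0(X)} \CT(X, H_3)
$$
and then to track the torus action and the deformed product through it. The right-hand identity is the standard description of a pulled-back continuous trace algebra: since $\CT(X,H_3)$ is locally $C_0(X,\cpt)$ assembled by a cocycle representing $H_3$, pulling this cocycle back along $q$ yields a continuous trace algebra with spectrum $Y$ and Dixmier--Douady class $q^*(H_3)$, which locally is $C_0(Y,\cpt) = C_0(Y)\otimes_{C_0(X)} C_0(X,\cpt)$. I would first fix such an isomorphism and verify that it intertwines the two torus actions: on the right $T$ acts only on the $C_0(Y)$ factor (fibrewise, trivially on $\CT(X,H_3)$), while on the left $T$ acts on $\CT(Y,q^*(H_3))$ through its action on the spectrum $Y$. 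Because $q^*(H_3)$ is pulled back from $X$ and $T$ preserves the fibres of $q$, the class is $T$-invariant and the induced action is exactly the diagonal one; this $T$-equivariance is the point requiring the most care.

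Granting the equivariant isomorphism, I would pass to spectral subspaces. Since $T$ acts trivially on $\CT(X,H_3)$, the decomposition of the right-hand side reads
$$
\bigl(C_0(Y)\otimes_{C_0(X)}\CT(X,H_3)\bigr)_\chi \cong C_0(Y)_\chi \otimes_{C_0(X)}\CT(X,H_3), \qquad \chi\in\ft T,
$$
and the same holds for $\CT(Y,q^*(H_3))$ via the isomorphism of the first paragraph. Next I would compare deformed products. In the formula $(\phi\star_\sigma\psi)_\chi(x)=\sum_{\chi_1\chi_2=\chi}\phi_{\chi_1}(x)\psi_{\chi_2}(x)\,\sigma(x;\chi_1,\chi_2)$ the cocycle $\sigma(x;\chi_1,\chi_2)$ is a scalar depending only on the characters, so on elementary tensors it multiplies the $C_0(Y)$ factors by $\star_\sigma$ while leaving the $\CT(X,H_3)$ factors to multiply in the ordinary way. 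Hence the deformed product on $\CT(Y,q^*(H_3))$ matches, term by term, the product of $C_0(Y)_\sigma$ tensored with the undeformed product of $\CT(X,H_3)$, and the involutions match for the same reason. This establishes the desired isomorphism on the dense subalgebra of $T$-finite elements.

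The remaining task, and the main obstacle, is to promote this algebraic isomorphism to the $C^*$-completions. Here I would invoke the explicit completion described in \S\ref{sect:param}: both deformed algebras are realised as operator-norm closures of the $\star_\sigma$-action on a common Hilbert space of the form $\cH_1\otimes L^2(T)\otimes\cH_2$, built from the universal representation of the fibres. Because the structural isomorphism is fibrewise and $T$-equivariant, it induces a unitary identification of these representation spaces intertwining the two $\star_\sigma$-actions, so the operator norms coincide and the completions agree. The delicate points are checking that the $C_0(X)$-balanced tensor norm is compatible with this representation-theoretic norm and that the fibrewise universal representations can be chosen coherently across $X$; both are handled by the construction of $A(X)_\sigma$ already recalled above, so no new analytic input is required beyond the equivariance established in the first paragraph.
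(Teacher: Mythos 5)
Your proposal is correct and takes essentially the same route as the paper: the paper's (very terse) proof likewise rests on the explicit Fourier decomposition into spectral subspaces---identifying the two sides $T$-equivariantly as $\widehat\bigoplus_{\chi \in \ft{T}}\, C_0(Y)_\chi \otimes_{C_0(X)} \CT(X,H_3)$, as in Example \ref{ex:torusbundledd}---together with the observation that the scalar cocycle $\sigma(x;\chi_1,\chi_2)$ deforms only the $C_0(Y)$-directions, so the products agree under the isomorphism. Your write-up simply makes explicit two points the paper leaves implicit, namely the $T$-equivariance of the undeformed structural isomorphism $\CT(Y,q^*(H_3)) \cong C_0(Y)\otimes_{C_0(X)}\CT(X,H_3)$ and the compatibility of the $C^*$-completions.
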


\begin{proof}
We use the explicit Fourier decomposition as in Example \ref{ex:torusbundledd}
and Example 6.2 \cite{HM}
 to deduce that first of all that both sides are naturally isomorphic as $T$-vector spaces,
 and also that products are compatible under the isomorphism.
\end{proof}

To summarize, we have the following main result, which follows from
Theorem 3.1 \cite{HM},  \S 4 \cite{HM}, Example \ref{ex:torusbundledd},
and the observations above.

\begin{theorem}\label{thm:classify}\footnote{This also easily follows from the well known fact
that in the Brauer group ${\textrm Br}_G(X)$   for $G$ acting trivially on  $X$, 
every element in this Brauer group factors by a product of the trivial action on A and
an action on $C_0(X,\cpt)$ (e.g. see \cite{CKRW}).
Anyway, the arguments presented here are direct and quite simple.}
Given a $H_3$-twisted NCPT-bundle $A(X)$, there is
a  defining deformation $\sigma\in C_b(X, Z^2(\ft{T},\torus))$ and a
principal torus bundle $q:Y \to X$ such that
$A(X)$ is $T$-equivariant Morita equivalent over $C_0(X)$, to the parametrised strict deformation
quantization of $\CT(Y, q^*(H_3))$  with respect to
$\sigma$, that is,
$$
A(X) \cong \CT(Y, q^*(H_3))_\sigma.
$$
Conversely, by Example \ref{ex:torusbundledd}, the parametrised strict deformation
quantization of $\CT(Y, q^*(H_3))$ is the $H_3$-twisted NCPT-bundle
$\CT(Y, q^*(H_3))_\sigma$.
\end{theorem}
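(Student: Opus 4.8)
The plan is to reduce the twisted classification to the untwisted one, which is already settled, by cancelling the Dixmier--Douady class. The mechanism is to tensor $A(X)$ over its structure algebra $C_0(X)$ with the continuous trace algebra carrying the opposite class, $\CT(X,-H_3)$, equipped with the trivial $T$-action. Much of this is carried out in the chain of isomorphisms displayed immediately before the statement; the theorem itself is the synthesis of that chain with Lemma~\ref{lem:H_3-twisted}, and I would organise it as follows.

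First I would set $B(X):=A(X)\otimes_{C_0(X)}\CT(X,-H_3)$ with the diagonal $T$-action and show it is an untwisted NCPT-bundle. The key computation is that, because $T$ acts trivially on the second factor, the crossed product passes through the balanced tensor product, giving $B(X)\rtimes_\alpha T\cong(A(X)\rtimes_\alpha T)\otimes_{C_0(X)}\CT(X,-H_3)$; combining the defining equivalence $A(X)\rtimes_\alpha T\cong\CT(X,H_3)$ with the additivity of Dixmier--Douady classes under $\otimes_{C_0(X)}$ then yields $\CT(X,H_3)\otimes_{C_0(X)}\CT(X,-H_3)\cong C_0(X,\cpt)$. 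Second, I would feed $B(X)$ into the classification Theorem~5.1 of \cite{HM} to obtain a principal torus bundle $q:Y\to X$ and a defining deformation $\sigma\in C_b(X,Z^2(\ft{T},\torus))$ with $B(X)\cong C_0(Y)_\sigma$. Tensoring back with $\CT(X,H_3)$ over $C_0(X)$ gives $A(X)\cong C_0(Y)_\sigma\otimes_{C_0(X)}\CT(X,H_3)$, and Lemma~\ref{lem:H_3-twisted} rewrites the right-hand side as $\CT(Y,q^*(H_3))_\sigma$, which is the assertion. For the converse, I would appeal to Example~\ref{ex:torusbundledd}, where computing the crossed product of $\CT(Y,q^*(H_3))_\sigma$ by its fibrewise $T$-action returns a continuous trace algebra over $X$ with class $H_3$, exhibiting it as a genuine $H_3$-twisted NCPT-bundle.

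The main obstacle is not a single hard estimate but the systematic bookkeeping of equivariance and $C_0(X)$-linearity through every step. The delicate points are: justifying that the crossed product by $T$ commutes with $\otimes_{C_0(X)}$ precisely because the $T$-action on $\CT(X,-H_3)$ is trivial; ensuring that each isomorphism is a $T$-equivariant Morita equivalence over $C_0(X)$, rather than an abstract isomorphism, so that the hypotheses of Theorem~5.1 of \cite{HM} really do apply to $B(X)$ and so that the final identification respects the $T$-action; and the structural input $\CT(X,H_3)\otimes_{C_0(X)}\CT(X,-H_3)\cong C_0(X,\cpt)$, namely that balanced tensoring adds Dixmier--Douady classes with $C_0(X,\cpt)$ representing the trivial class. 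Once these are in place, the theorem follows mechanically from Lemma~\ref{lem:H_3-twisted}.
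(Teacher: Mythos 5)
Your proposal is correct and follows essentially the same route as the paper: you untwist by forming $A(X)\otimes_{C_0(X)}\CT(X,-H_3)$ with the diagonal action, compute its crossed product to be $C_0(X,\cpt)$, invoke Theorem~5.1 of \cite{HM} to get $C_0(Y)_\sigma$, tensor back with $\CT(X,H_3)$, and conclude via Lemma~\ref{lem:H_3-twisted}, with the converse supplied by Example~\ref{ex:torusbundledd}. This is exactly the chain of observations the paper assembles, so no further comparison is needed.
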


\section{T-duality and K-theory}
\label{sect:T-duality}

Here we prove the result,
\begin{theorem}\label{thm:Tduality} In the notation above,
$(X\times T, H_1 + H_2 +H_3)$ and the parametrised strict deformation quantization
of $(Y, q^*(H_3))$ with deformation parameter $\sigma,\, [\sigma]=H_1$,
are T-dual pairs, where the 1st Chern class $c_1(Y) = H_2$. That is,
$$
 \CT(Y, q^*(H_3))_\sigma \rtimes V  \cong \CT(X\times T, H_1 + H_2 + H_3).
 $$
\end{theorem}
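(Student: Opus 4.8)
The plan is to realise the left-hand side as a continuous-trace algebra by analysing the crossed product fibrewise over $X$, and then to identify the two invariants---spectrum and Dixmier--Douady class---that determine such an algebra up to $T$-equivariant Morita equivalence. First I would make the $V$-action explicit. The principal $T$-bundle $q\colon Y\to X$ carries a fibrewise $T$-action, which passes to the dual action $\hat\alpha$ on $\CT(Y,q^*(H_3))_\sigma$ described in \S\ref{sect:param}; precomposing with the covering homomorphism $V\to T$ gives the $V$-action. By construction this action covers the translation action of $V$ on the $T$-factor of $X\times T$ through $V/\Lambda=T$, where $\Lambda\cong\integer^n$ is the integer lattice, so the crossed product should indeed live over $X\times T$.

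The geometric core of the argument is a parametrised, twisted version of Green's imprimitivity theorem. Fibrewise over $x\in X$ the algebra is $C(V/\Lambda)=C(T)$ tensored with the continuous-trace data and deformed by $\sigma(x)$; since $[\sigma]\in H^1(X;H^2(T;\integer))$ has vanishing $H^0(X;\cdot)$-component, each individual fibre deformation is cohomologically trivial, so the fibre is $C(T)\otimes\cpt$ with $V$ acting by translation. Green's theorem then identifies $C(T)\rtimes V$, up to Morita equivalence, with the group $C^*$-algebra $C^*(\Lambda)$, whose spectrum is the dual torus $\cong T$. Globalising this equivalence over $X$ shows that $\CT(Y,q^*(H_3))_\sigma\rtimes V$ is a continuous-trace algebra with spectrum $X\times T$. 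Connes' Thom isomorphism $K_*(A\rtimes V)\cong K_{*-n}(A)$ provides a consistency check at the level of $K$-theory.

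The main obstacle is the computation of the Dixmier--Douady class of this crossed product, which must be shown to equal $H_1+H_2+H_3$; this is exactly the T-duality transformation of the flux, and I would track its three components separately as \emph{global} cohomological data. The base twist $q^*(H_3)$ descends through $q$ to the summand $H_3\in H^3(X)$; the Chern class $c_1(Y)=H_2$ of the $T$-bundle re-emerges as the summand $H_2\in H^2(X;H^1(T;\integer))$; and---this is the genuinely new point---the variation of the deformation cocycle $\sigma$ over $X$, measured by $[\sigma]=H_1\in H^1(X;H^2(T;\integer))$, is converted by the crossed product into the $H_1$ summand. Although each fibre deformation is trivial, the monodromy of the field $\sigma$ over $X$ obstructs a global trivialisation of the Green bimodule, and this obstruction is precisely the class $H_1$ on $X\times T$: fibrewise noncommutativity is traded for topological flux. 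Carrying out this cocycle bookkeeping rigorously is where I would invoke the Packer--Raeburn stabilisation together with the Phillips--Raeburn description of the Dixmier--Douady invariant of a crossed product.

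Finally, the statement is consistent with, and can alternatively be deduced from, the forward identification $\CT(X\times T,H_1+H_2+H_3)\rtimes V\cong\CT(Y,q^*(H_3))_\sigma$ recorded in the Introduction, together with Takai duality $A\rtimes V\rtimes\hat V\cong A\otimes\cpt$ and the self-duality $\hat V\cong V$. This exhibits the asserted isomorphism as the involutivity of the T-duality correspondence, with the $\cpt$-factor absorbed into the $T$-equivariant Morita equivalence; checking that the dual $\hat V$-action matches the $V$-action constructed in the first step is the one compatibility that this route still requires.
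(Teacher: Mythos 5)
There is a genuine gap, and it sits at the load-bearing step of your second paragraph. You assert that because $[\sigma]\in H^1(X;H^2(T;\integer))$ ``has vanishing $H^0$-component,'' each individual fibre deformation is cohomologically trivial, so that the fibre of $\CT(Y,q^*(H_3))_\sigma$ is $C(T)\otimes\cpt$ with $V$ acting by translation. This is false: the class $[\sigma]$ only records the homotopy class of the map $x\mapsto[\sigma(x)]\in H^2(\ft{T},\torus)$, and places no constraint on the pointwise values of $\sigma$ (a constant, genuinely noncommutative cocycle has $[\sigma]=0$). The fibres of the deformed algebra are stabilized noncommutative tori $A_{\sigma(x)}\otimes\cpt$, so the untwisted Green imprimitivity theorem for $C(T)\rtimes V\sim C^*(\Lambda)$ does not apply fibrewise as you state it; what is true is a Takai/Heisenberg-type duality turning $A_{\sigma(x)}\rtimes V$ into a continuous-trace algebra with spectrum $T$, which is a different (dual) mechanism from pointwise triviality. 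Compounding this, the Dixmier--Douady computation --- which you correctly identify as the core of the theorem --- is deferred to unexecuted ``cocycle bookkeeping'' via Packer--Raeburn and Phillips--Raeburn, and your fallback route is circular: the ``forward identification recorded in the Introduction'' is not an independent prior result but precisely the statement of this theorem, announced there and proved only in this section.

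For comparison, the paper avoids any direct Dixmier--Douady computation. It quotes Lemma 8.1 of \cite{ENOO}, which already identifies $\CT(X\times T,H_1+H_2)\rtimes_\beta V$ up to Morita equivalence with $A(X):=C_0(X,\cpt)\rtimes_\sigma\ft{T}$; Takai duality gives $A(X)\rtimes T\cong C_0(X,\cpt)$, so $A(X)$ is an NCPT-bundle, and the classification theorem (Theorem 5.1 of \cite{HM}, resting on Echterhoff--Williams) yields a $T$-equivariant Morita equivalence $A(X)\sim C_0(Y)_\sigma$. Applying Takai duality again gives $C_0(Y)_\sigma\rtimes V\cong\CT(X\times T,H_1+H_2)$, and the $H_3$-twist is then incorporated by Lemma \ref{lem:H_3-twisted}, since $V$ acts trivially on the factor $\CT(X,H_3)$ in $\CT(Y,q^*(H_3))_\sigma\cong C_0(Y)_\sigma\otimes_{C_0(X)}\CT(X,H_3)$. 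In other words, the bookkeeping you postpone is exactly what ENOO's Lemma 8.1 and the HM classification encapsulate; to repair your argument you would either need to carry out that twisted Phillips--Raeburn computation in full (essentially redoing Mathai--Rosenberg/ENOO), or cite those results explicitly --- at which point your proof collapses into the paper's.
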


\begin{proof}
 As before, let $V$ be the vector group that is the 
 universal covering group of the torus group $T$, and the action of $V$ on the spectrum 
 factors through $T$.
 By Lemma 8.1 in \cite{ENOO}, the crossed product $\CT(X\times T, H_1 + H_2) \rtimes_\beta V$ is Morita equivalent to 
 $C_0(X,  \cpt) \rtimes_\sigma \ft{T}$, where as before, the Pontryagin dual group $ \ft{T}$ acts fibrewise on  $C_0(X, \cpt) $. 
 Setting $A(X)= C_0(X,  \cpt) \rtimes_\sigma \ft{T}$, then it is a $C^*$-bundle over $X$ with a fibrewise
 action of $T$ and by Takai duality, $A(X) \rtimes T \cong C_0(X, \cpt)$. Therefore $A(X)$ is a NCPT-bundle
 and by Theorem 5.1 in \cite{HM}, we see that there is a $T$-equivariant Morita equivalence,
  $$A(X) \sim C_0(Y)_\sigma, $$
 where the notation is as in the statement of this Theorem.
 By Lemma \ref{lem:H_3-twisted}, 
$$
 \CT(Y, q^*(H_3))_\sigma \rtimes V \cong \left(C_0(Y)_\sigma  \rtimes V\right) \otimes_{C_0(X)} \CT(X, H_3).
 $$
By Takai duality, 
 $C_0(Y)_\sigma  \rtimes V \cong \CT(X\times T, H_1 + H_2)$. Therefore
$$
 \CT(Y, q^*(H_3))_\sigma \rtimes V  \cong \CT(X\times T, H_1 + H_2 + H_3),
 $$
 proving the result.
\end{proof}

Using Connes Thom isomorphism theorem \cite{Connes} and the result above, one has

\begin{corollary}\label{thm:Ktheory}
The K-theory of $\CT(Y, q^*(H_3))_\sigma$ depends on the deformation parameter in general.
More precisely, in the notation above $ [\sigma]=H_1$, $c_1(Y) = H_2$,
$$
K_\bullet(\CT(Y, q^*(H_3))_\sigma) \cong K^{\bullet + \dim V}(X\times T, H_1 + H_2 +H_3),
$$
where the right hand side denotes the twisted K-theory.
\end{corollary}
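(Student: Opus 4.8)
The plan is to obtain the K-theory computation as a short concatenation of three isomorphisms: Connes' Thom isomorphism, the T-duality of Theorem \ref{thm:Tduality}, and the standard identification of the operator K-theory of a continuous trace algebra with the topological twisted K-theory of its spectrum. First I would recall that Connes' Thom isomorphism \cite{Connes} asserts that for any action of the vector group $V\cong\reals^{\dim V}$ on a $C^*$-algebra $A$ there is a natural isomorphism
$$K_\bullet(A)\cong K_{\bullet+\dim V}(A\rtimes V),$$
the grading being read modulo $2$. I would apply this to $A=\CT(Y,q^*(H_3))_\sigma$ equipped with the $V$-action used in the proof of Theorem \ref{thm:Tduality}, namely the action induced through the universal covering $V\to T$ of the fibrewise torus action, yielding
$$K_\bullet(\CT(Y,q^*(H_3))_\sigma)\cong K_{\bullet+\dim V}\bigl(\CT(Y,q^*(H_3))_\sigma\rtimes V\bigr).$$

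Next I would invoke Theorem \ref{thm:Tduality} to replace the crossed product by a continuous trace algebra, giving
$$K_\bullet(\CT(Y,q^*(H_3))_\sigma)\cong K_{\bullet+\dim V}\bigl(\CT(X\times T,H_1+H_2+H_3)\bigr).$$
Finally, the operator K-theory of a continuous trace algebra $\CT(Z,H)$ with spectrum $Z$ and Dixmier--Douady class $H$ is the twisted K-theory of $Z$ with twist $H$, i.e. $K_\bullet(\CT(Z,H))\cong K^\bullet(Z,H)$. Applying this with $Z=X\times T$ and $H=H_1+H_2+H_3$ produces exactly
$$K_\bullet(\CT(Y,q^*(H_3))_\sigma)\cong K^{\bullet+\dim V}(X\times T,H_1+H_2+H_3),$$
which is the assertion of the corollary.

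The only genuinely delicate point will be the bookkeeping of the grading conventions. One must check that the degree shift by $\dim V$ supplied by Connes' Thom isomorphism is compatible with the convention under which the twisted K-theory $K^\bullet(Z,H)$ of the spectrum is identified with the operator K-theory $K_\bullet(\CT(Z,H))$, and that the $V$-action entering Connes' theorem is precisely the one with respect to which the crossed product was computed in Theorem \ref{thm:Tduality}. I would also note in passing that the phrase ``depends on the deformation parameter in general'' is justified by the explicit appearance of $[\sigma]=H_1$ in the right-hand twisted K-theory group, so that varying $\sigma$ within its class genuinely changes the answer. Once the conventions are pinned down, no further analytic input is required and the three displayed isomorphisms compose to give the result.
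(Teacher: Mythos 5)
Your proposal is correct and follows essentially the same route as the paper, whose proof consists precisely of invoking Connes' Thom isomorphism together with Theorem \ref{thm:Tduality} and the standard identification $K_\bullet(\CT(Z,H))\cong K^\bullet(Z,H)$; you have merely spelled out the three steps the paper leaves implicit. Your remarks on grading conventions and on matching the $V$-action to the one used in the T-duality theorem are sensible but routine, and nothing further is needed.
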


\section{Examples}
\label{sect:examples}

\begin{example}[Noncommutative torus]
We begin by recalling the construction by Rieffel \cite{Rieffel1} realizing the smooth noncommutative
torus as a deformation quantization of the smooth functions on a torus
$T = \reals^{n}/\integer^{n}$ of dimension equal to $n$.

Recall that any translation invariant Poisson bracket on $T$  is just
$$
\{a, b\} = \sum \theta_{ij} \frac{\partial a}{\partial x_i}\frac{\partial b}{\partial x_j},
$$
for $a, b \in C^\infty(T)$, where $(\theta_{ij} )$ is a skew symmetric matrix.
The action of $T$ on itself is given by translation. The Fourier transform is an isomorphism
between $C^\infty(T)$ and $\cS(\hat T)$, taking the pointwise product on $C^\infty(T)$ to the
convolution product on $\cS(\hat T)$ and taking differentiation with respect to a coordinate function
to multiplication by the dual coordinate. In particular, the Fourier transform of the Poisson bracket gives
rise to an operation on $\cS(\hat T)$ denoted the same. For $\phi,\psi \in \cS(\hat T)$, define
$$
\{\psi, \phi\} (p)= -4\pi^2\sum_{p_1+p_2=p} \psi(p_1) \phi (p_2) \gamma(p_1, p_2)
$$
where $\gamma$ is the skew symmetric form on $\hat T$ defined by
$$
 \gamma(p_1, p_2) =  \sum \theta_{ij} \,p_{1,i}\,p_{2,j}.
$$
For $\hbar \in \reals$, define a skew bicharacter $\sigma_\hbar$ on $\hat T$ by
$$
\sigma_\hbar(p_1, p_2) = \exp(-\pi\hbar\gamma(p_1, p_2)).
$$
Using this, define a new associative product $\star_\hbar$ on $\cS(\hat T)$,
$$
(\psi\star_\hbar\phi) (p) = \sum_{p_1+p_2=p} \psi(p_1) \phi (p_2) \sigma_\hbar(p_1, p_2).
$$
This is precisely the smooth noncommutative torus $A^\infty_{\sigma_\hbar}$.

The norm $||\cdot||_\hbar$ is defined to be the operator norm for the action of $\cS(\hat T)$
on $L^2(\hat T)$ given by $\star_\hbar$. Via the Fourier transform, carry this structure
back to $C^\infty(T)$, to obtain the smooth noncommutative torus
as a strict deformation quantization of $C^\infty(T)$, \cite{Rieffel1} with respect to the
translation action of $T$.
\end{example}

\begin{example}\label{ex:torusbundledd}
We next generalize the first example above to the case of principal torus bundles $q:Y\to X$ of rank equal to $n$,
together with a algebra bundle $\cpt_P\to X$ with fibre the $C^*$-algebra of compact operators $\cpt$.
Here $P\to X$ is a principal bundle with structure group the projective unitary group ${\textrm PU}$, which acts
on $\cpt$ by conjugation.
Note that fibrewise smooth sections of $q^*(\cpt_P)$ over $Y$ decompose as a direct sum,
$$
\begin{array}{lcl}
C^\infty_{\textrm{fibre}}(Y, q^*(\cpt_P)) &= &\widehat\bigoplus_{\alpha \in \hat T} \ C^\infty_{\textrm{fibre}}(X, \cL_\alpha \otimes \cpt_P) \\
\phi &=& \sum_{\alpha \in \hat T} \phi_\alpha
\end{array}
$$
where $ C^\infty_{\textrm{fibre}}(X, \cL_\alpha \otimes E)$ is defined as the subspace of $C^\infty_{\textrm{fibre}}(Y, q^*(E))$
consisting of sections which transform under the character $\alpha \in \hat T$, and where $ \cL_\alpha$
denotes the associated line bundle $Y \times_\alpha \complex$ over $X$. That is,
$ \phi_\alpha(yt) = \alpha(t)  \phi_\alpha(y), \, \forall\, y\in Y,\, t\in T$.
The direct sum is completed in such a way that the function
$\hat T \ni \alpha \mapsto ||\phi_\alpha||_\infty \in \reals$
is in $\cS(\hat T)$.

For $\phi, \psi \in C^\infty_{\textrm{fibre}}(Y, q^*(\cpt_P))$, define a deformed product
$\star_\hbar$  
as follows.
For $y \in Y$, $\alpha, \alpha_1, \alpha_2 \in \hat T$, let
\begin{equation}\label{defaction}
(\psi\star_\hbar\phi) (y, \alpha) = \sum_{\alpha_1\alpha_2=\alpha} \psi(y, \alpha_1)
\phi (y, \alpha_2)
\sigma_\hbar(q(y); \alpha_1, \alpha_2),
\end{equation}
using the notation $ \psi(y, \alpha_1) =  \psi_{\alpha_1}(y)$ etc., and where
$\sigma_\hbar \in C(X,  Z^2(\hat T, \torus))$ is a continuous
 family of bicharacters of
$\hat T$ such that $\sigma_0 =1$. The cocycle property of $\sigma_\hbar$ ensures that
(\ref{defaction}) defines an associative product.
The construction is clearly $T$-equivariant. We denote the deformed algebra as $C^\infty_{\textrm{fibre}}(Y, q^*(\cpt_P))_{\sigma_\hbar}$.
\end{example}

\section{General torus bundles}

We extend the results of the previous sections to the case of
deformations of general torus bundles,
not just principal torus bundles.
That is, the noncommutative torus bundles (NCT-bundles) of this section
strictly include the noncommutative principal
torus bundles (NCPT-bundles) of  \cite{HM},\cite{ENOO}.

For any fibre bundle $F\rightarrow\ Y\stackrel{\xi}{\rightarrow} X$ with structure
group $G$, the action of $G$ on $F$ induces an action of $\pi_{0}(G)$ on the homology and cohomology
of $F$.  When $X$ is a connected manifold, there is a well-defined homomorphism
$\pi_{1}(X)\rightarrow\pi_{0}(G)$ that gives each homology or cohomology group of $F$ the
structure of a $\bbZ\pi_{1}(X)$-module.  Now suppose that $F$ is a torus $T$ and
$G={\textrm Diff}(T)$.  It is well known that
$\pi_{0}(G)\cong {\textrm GL}(n,\bbZ)$, where $n=\dim(T)$, and that the $\pi_{0}(G)$-action on $H_{1}(T)$
may be identified with the
natural action of ${\textrm GL}(n,\bbZ)$ on $\widehat T$.  Given any representation
$\rho:\pi_{1}(X)\rightarrow {\textrm GL}(n,\bbZ)$, we let $\bbZ^{n}_{\rho}$ denote the
corresponding $\bbZ\pi_{1}(B)$-module. We will make use of the following
proposition, which is well known, and explicitly stated in the appendices of \cite{Kahn}.

\begin{proposition}
Assume that $X$ is a compact, connected manifold, and choose any
representation $\rho:\pi_{1}(X)\rightarrow {\textrm GL}(n,\bbZ)$.  Then there is a natural, bijective
correspondence between the equivalence classes of torus bundles over $X$ inducing the
module structure $\bbZ^{n}_{\rho}$ on $H_{1}(T)$ and the elements of
$H^{2}(X;\bbZ^{n}_{\rho})$, the second cohomology group of $X$ with local coefficients
$\bbZ^{n}_{\rho}$.\hfill$\square$
\end{proposition}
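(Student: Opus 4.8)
The plan is to classify these torus bundles homotopy--theoretically, by reducing the structure group from ${\textrm Diff}(T)$ to the affine group ${\textrm Aff}(T)=T\rtimes{\textrm GL}(n,\bbZ)$ and then reading off the answer from twisted obstruction theory. First I would record the homotopy type of the relevant transformation groups. Since $T=\reals^n/\bbZ^n$ is a $K(\bbZ^n,1)$, the topological monoid $\aut(T)$ of self--homotopy--equivalences satisfies $\pi_0(\aut(T))\cong\Aut(\bbZ^n)={\textrm GL}(n,\bbZ)$, while its identity component has $\pi_1=Z(\bbZ^n)=\bbZ^n$ and vanishing higher homotopy, hence is homotopy equivalent via translations to $T$ itself; this already gives $\aut(T)\simeq{\textrm Aff}(T)=T\rtimes{\textrm GL}(n,\bbZ)$ with the natural semidirect product. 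The input I would take as known (this is the content cited from \cite{Kahn}) is that ${\textrm Diff}_0(T)\simeq T$, so that ${\textrm Aff}(T)\hookrightarrow{\textrm Diff}(T)\hookrightarrow\aut(T)$ are all homotopy equivalences; consequently the smooth bundle classification agrees with the homotopy classification, and every torus bundle acquires an affine structure, unique up to isomorphism.

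Granting this, the classification becomes the evaluation of $[X,B{\textrm Aff}(T)]$. Because $T$ is connected and abelian, the extension $1\to T\to{\textrm Aff}(T)\to{\textrm GL}(n,\bbZ)\to1$ produces a fibration
\[
BT=K(\bbZ^n,2)\ \longrightarrow\ B{\textrm Aff}(T)\ \longrightarrow\ B{\textrm GL}(n,\bbZ)=K({\textrm GL}(n,\bbZ),1),
\]
and, the extension being split, I would identify $B{\textrm Aff}(T)$ with the Borel construction $\big(K(\bbZ^n,2)\big)_{h{\textrm GL}(n,\bbZ)}$ for the action of ${\textrm GL}(n,\bbZ)$ on $\pi_2(BT)=H_1(T)=\bbZ^n$. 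A map $X\to B{\textrm Aff}(T)$ projects to a map $X\to K({\textrm GL}(n,\bbZ),1)$, which is exactly the datum of the monodromy $\rho\colon\pi_1(X)\to{\textrm GL}(n,\bbZ)$ and pins down the local system $\bbZ^n_\rho$ on $H_1(T)$. Fixing $\rho$, the torus bundles inducing $\bbZ^n_\rho$ then correspond to the homotopy classes of lifts of the classifying map $f_\rho\colon X\to K({\textrm GL}(n,\bbZ),1)$ through this fibration.

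The last step is to count these lifts. As the fibre $K(\bbZ^n,2)$ is an Eilenberg--MacLane space with a single nonzero homotopy group, and the splitting ${\textrm GL}(n,\bbZ)\hookrightarrow{\textrm Aff}(T)$ furnishes a canonical section $B{\textrm GL}(n,\bbZ)\to B{\textrm Aff}(T)$ (geometrically the flat torus bundle $\widetilde X\times_\rho T$), the pulled--back fibration over $X$ is a $\rho$--twisted principal $K(\bbZ^n,2)$--fibration, whose set of homotopy classes of sections is a torsor under $H^2(X;\bbZ^n_\rho)$; the canonical section supplies a basepoint identifying it naturally with the group $H^2(X;\bbZ^n_\rho)$, with the flat bundle mapping to $0$. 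I would make this explicit in \v{C}ech terms over a good cover $\{U_i\}$: writing affine transition functions as $g_{ij}=(\tau_{ij},A_{ij})$ with the $A_{ij}$ representing $\rho$ and $\tau_{ij}\colon U_{ij}\to T$, lifting each $\tau_{ij}$ to $\reals^n$ over the contractible $U_{ij}$ yields a $\rho$--twisted $\bbZ^n$--valued $2$--cocycle whose class in $H^2(X;\bbZ^n_\rho)$ is the complete invariant, with refinements and changes of trivialization altering it only by coboundaries, giving bijectivity and naturality. The main obstacle is the very first step: justifying the reduction to the affine structure group, i.e.\ the homotopy equivalence ${\textrm Diff}_0(T)\simeq T$, which is a nontrivial fact about diffeomorphism groups of tori and is precisely what forces all higher obstructions to vanish so that only $H^2$ survives; everything downstream is formal twisted obstruction theory.
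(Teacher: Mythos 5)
Your proposal is correct and follows essentially the same route as the paper's own justification: the paper gives no in-text argument for this proposition but defers to the appendices of \cite{Kahn}, where precisely your argument appears --- reduction of the structure group to ${\textrm{Aff}}(T)=T\rtimes{\textrm{GL}}(n,\bbZ)$, the fibration $K(\bbZ^{n},2)\to B{\textrm{Aff}}(T)\to K({\textrm{GL}}(n,\bbZ),1)$, and the identification of homotopy classes of lifts of the monodromy map with a torsor under $H^{2}(X;\bbZ^{n}_{\rho})$ based at the flat bundle $\widetilde X\times_{\rho}T$. One caveat worth recording: the input you correctly isolate as the crux, ${\textrm{Diff}}_{0}(T)\simeq T$, is a theorem only for $\dim T\le 3$ (Earle--Eells for $n=2$, Hatcher for $n=3$) and fails for $n\ge 5$ (Farrell--Hsiang, Hsiang--Sharpe), so for general $n$ the proposition should be read with structure group ${\textrm{Aff}}(T)$ (equivalently, in the topological or homotopy category where only the $K(\bbZ^{n},2)$-stage survives) --- a caveat the paper inherits from its citation as well, and which does not affect the formal twisted obstruction theory that constitutes the rest of your argument.
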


\begin{remark}
We call the cohomology class corresponding to the symplectic torus bundle  $\xi$ the
\emph{characteristic class} of $\xi$ and denote it by $c(\xi) \in H^{2}(X;\bbZ^{n}_{\rho})$.
Then the characteristic class $c(\xi)$
vanishes if and only if $\xi$ admits a section.  When the representation $\rho$ is trivial,
then $\xi$ is a principal torus bundle and the characteristic class reduces to the first Chern class.
$c(\xi)=c_1(\xi)=0$ if and only if $\xi$ is trivial.
\end{remark}

Let $X$ be compact and $T\rightarrow Y\stackrel{\xi}{\rightarrow} X$ be a torus bundle over $X$.
Let $\Gamma \rightarrow \widehat X\stackrel{\eta}{\rightarrow} X$ denote the universal cover of
$X$, and consider the lifted torus bundle $T\rightarrow \eta^*(Y)\stackrel{\eta^*\xi}{\rightarrow} \widehat X$.
Since $\widehat X$ is simply-connected, it is classified by a characteristic class in  $H^{2}(\widehat X;\bbZ^{n})$,
so that $T\rightarrow \eta^*(Y)\stackrel{\eta^*\xi}{\rightarrow} \widehat X$ is a principal torus bundle.
Let  $\sigma\in C_b(X, Z^2(\ft{T},\torus))$ be a deformation parameter.
Then $C_0(\eta^*(Y))$ is a $C^*$-bundle over $\widehat X$, and as before, form the
parametrised strict deformation quantization $C_0(\eta^*(Y))_{\widehat\sigma}$, where
$\widehat\sigma$ is the lift of $\sigma$ to $\widehat X$. 
Since $ \eta^*(Y)$ is the total space of a principal $\Gamma$-bundle over the compact space $Y$,
the action of $\Gamma$ on $ \eta^*(Y)$
is free and proper, so the action of $\Gamma$ on $C_0(\eta^*(Y))_{\widehat\sigma}$ is also proper in the
sense of Rieffel \cite{Rieffel4}. In particular, the fixed-point algebra 
 $C_0(\eta^*(Y))_{\widehat\sigma}^\Gamma$ makes sense and is the parametrised strict deformation
 quantization of $C_0(Y)$. We define this to be a {\emph{noncommutative torus bundle}} (NCT-bundle) over $X$.
 In particular, they strictly include noncommutative {principal} torus bundles (NCPT-bundles) over $X$.
 NCT-bundles are classified by a representation $\rho:\pi_{1}(X)\rightarrow {\textrm GL}(n,\bbZ)$ together with
 a cohomology class with local coefficients, $H_2 \in  H^{2}(X;\bbZ^{n}_{\rho})$
 and a deformation parameter $\sigma\in C_b(X, Z^2(\ft{T},\torus))$.

 Finally, we define a $H_3$-twisted NCT-bundle over $X$ to be a  parametrised strict deformation
 quantization of $\CT(Y, \xi^*(H_3))$, denoted by  $\CT(\eta^*(Y), \eta^*\xi^*(H_3) )_{\widehat\sigma}^\Gamma.$

%%%%%%%%%%%%%%%%%%%%%%%%%%%%%%%%%%%%%%%%%%%%%%%%%%%%%%%%%%%%%%%%%%
%                                                                %
%                                                                %
%      A very brief discussion of the categorical aspect         %
%                                                                %
%                                                                %
%%%%%%%%%%%%%%%%%%%%%%%%%%%%%%%%%%%%%%%%%%%%%%%%%%%%%%%%%%%%%%%%%%

\section{Deformations of monoidal categories} \label{sec:category}
Rieffel's strict deformation theory modifies the multiplication
on an algebra.  As noted in the introduction it is a useful
principle that deformation should involve not only an algebra,
but a category in which the algebra is but one object.
Fortunately Rieffel's strict C$^*$-algebra deformation can be
extended to give a functor changing the tensor product in a
monoidal category of Fr\'echet $V$-modules for a vector group $V$.
Another motivation for this stems from the way in which nonassociative
crossed products could be understood naturally in the context
of a monoidal category, \cite{BHM}. A similar interpretation
of noncommutative crossed products could make it easier to
unify the two examples.
Let $X$ be a locally compact Hausdorff space and $V$ an abelian
group. We start with Banach $V$-modules (in which $V$ acts by
isometries), with a commuting action of $C_0(X)$. Each module
has a dense submodule of $V$-smooth vectors, on which $V$ and $C_0(X)$
still act and which can be given the structure of a Fr\'echet
space.
Consider the strict symmetric monoidal category of these
$C_0(X)$-$V$-{\sf mod} of smooth $C_0(X)\rtimes V$-modules with
tensor product $\otimes_0 = \otimes_{C_0(X)}$ as in \cite{Rieffel5},
and unit object $C_0(X)$ (with the multiplication action of $C_0(X)$ and
trivial action of $V$). We shall suppose also that we have a
symmetric bicharacter $e = e^{iB}: V\times V \to C_0(X)$, and a
$B$-skew-adjoint automorphism $J$ of $V$.
The monoidal functor ${\mathcal D}_J$ to a braided category $C_0(X)-(V,J)$-{\sf
mod} acts as the identity on objects and morphisms, but gives a
braided tensor product $\otimes_J$ and (assuming the integral
well-defined) with, for objects $\alg$ and $\bdd$, the
consistency map $c_J:\alg \otimes_J \bdd \to \alg\otimes\bdd$,
taking $x\in \alg$, $y\in \bdd$
$$
c_J(x\otimes_J y) = \int_{V\times V} e(u,v)((Ju).x) \otimes_0 (v.y) \,du\,dv,
$$
where $e$ acts on the tensor product as an element of
$C_0 (X)$. (It follows from the discussion in \cite{Rieffel1,HM} that
$c_J$ is invertible.) Essentially all the technical estimates needed
to show that this is well-defined, and to derive its properties have
already been provided by Rieffel in \cite[Chapter I]{Rieffel1}
using a simplified version of H\"ormander's partial integration
technique applied to smooth maps from a vector group to a
Fr\'echet space. In a Hilbert $V$ module $M$ the smooth vectors
$M^\infty$ for the action provide a Fr\'echet space, and the
rest is done as in \cite{Rieffel1}.
(Within the tensor product of two $V$-modules $M_1$ and $M_2$
is the dense subspace of smooth vectors $(M_1\otimes
M_2)^\infty$, which contains the tensor product
$M^\infty_1\otimes M_2^\infty$. We have just mimicked the
constructions of \cite[Chapter 2]{Rieffel1} to deform the tensor product,
rather than an algebra product.)
One can check that this is $C_0(X)$-linear
(due to the triviality of the action of $V$ on $C_0(X)$) and
is compatible with strict associativity \cite[Theorem
2.14]{Rieffel1}, and $C_0(X)$ as unit object. (For non-vanishing
$H_0$, we would instead need consistency with the
new associativity map in the usual hexagonal diagram.)
Since $V$ is abelian it has the tensor product action $\Delta$ of $V$,
(which changes to $c_J^{-1}\circ\Delta \circ c_J$in the new monoidal category).
Some care is needed because the asymmetry in $c_J$ means that
deformed category is braided. Assuming that one started with
a trivial braiding given by the flip $\Psi_0: m\otimes_0 n \to n\otimes_0 m$,
one obtains $\Psi_J = c_J^{-1}\Psi_0 c_J: M\otimes_J N \to N\otimes_J
M$. Since $\Psi_0^2 = 1$, we automatically have $\Psi_J^2 = 1$,
so that $\Psi_J$ is a symmetric braiding. More generally
$\Psi_J$ and $\Psi_0$ satisfy the same polynomial identities:
if $\Psi_0$ is of Hecke type then so is $\Psi_J$.
If an object $\alg$ has a multiplication morphism
$\mu:\alg\otimes_0\alg \to \alg$, then the morphism property
ensures that $V$ acts by automorphisms, and the deformed
multiplication is $\mu\circ c_J: \alg\otimes_J\alg \to \alg$,
or, using $\star_J$ and $\star$ for the deformed and undeformed
multiplications,
$$
x\star_J y = \int_{V\times V} e(u,v)((Ju).x) \star (v.y) \,du\,dv,
$$
which is the Rieffel deformed product \cite{Rieffel1}. Due to
the braiding a commutative algebra product $\mu$ (satisfying
$\mu\circ\Psi = \mu$) turns into a braided commutative, but
noncommutative, product with $\mu_J\circ\Psi_J = \mu_J$, as one
would expect for a map transforming classical to quantum
theory.
We can similarly deform $\alg$-modules and bimodules. For
example, an action $\alpha:\alg\otimes_0 M \to M$ can be
deformed to $\alpha_J:\alg\otimes_J M \to M$
$$
\alpha_J(a)[m] = \int_{V\times V} e(u,v)\alpha((Ju).a)[v.m] \,du\,dv.
$$ One can follow the same strategy as in \cite{BHM} and study the effects on compact operators, crossed products etc, but we shall content ourselves with a discussion of the modules. As in \cite[Theorem 2.15]{Rieffel1} we can show that the functors for different $J$ satisfy ${\mathcal D}_K\circ{\mathcal D}_J = {\mathcal D}_{J+K}$.

%%%%%%%%%%%%%%%%%%%%%%%%%%%%%%%%%%%%%%%%%%%%%%%%%%%%%%%%%%%%%%%%%%
%                                                                %
%                                                                %
%                         End of insert                          %
%                                                                %
%                                                                %
%%%%%%%%%%%%%%%%%%%%%%%%%%%%%%%%%%%%%%%%%%%%%%%%%%%%%%%%%%%%%%%%%%

 \section{Parametrised strict deformation quantization of
 Hilbert $C^*$-modules over $C^*$-bundles}

As an application of the general procedure 
outlined in \S\ref{sec:category}, we extend the parametrised strict deformation quantization of
$C^*$-bundles $A(X)$ to  Hilbert $C^*$-modules over $A(X)$, but somewhat more directly.

 Recall that Hilbert $C^*$-modules generalise the notion of a Hilbert space, in that they endow a linear space with an inner product which takes values in a $C^*$-algebra. They were developed by Marc Rieffel in \cite{Rieffel5},
 which used Hilbert $C^*$-modules to construct a theory of induced representations of $C^*$-algebras. In \cite{Rieffel3},
 Rieffel used Hilbert $C^*$-modules to extend the notion of Morita equivalence to C*-algebras.
 In \cite{Kas80},  Kasparov used Hilbert $C^*$-modules in his formulation of bivariant K-theory.  Hermitian vector bundles are examples of Hilbert $C^*$-modules over
 commutative $C^*$-algebras.

Let $A(X)$ be a $C^*$-algebra bundle over $X$, and $E(X)$ a Hilbert $C^*$-module over $A(X)$
respecting the fibre structure.
That is, $E(X)$ has an $A(X)$-valued inner product,
$$
\langle.,.\rangle : E(X) \times E(X) \longrightarrow A(X),
$$
and the fibre $E(X)_x: E(X)/I_x$ is a (left) Hilbert $C^*$-module over $A(X)_x$ for all $x\in X$.

Suppose now that $A(X)$ has a fibrewise action $\alpha$
of a torus $T$ and let $E(X)$ have a compatible
 fibrewise action of  $T$.

We have the direct sum decomposition,
$$
\begin{array}{lcl}
E(X)  &\cong & \widehat\bigoplus_{\chi \in \hat T} E(X)_\chi
 \\
\psi(x) &=& \sum_{\chi \in \hat T} \psi_\chi(x)
\end{array}
$$
for $x\in X$, where for $\chi\in\widehat{T}$,
$$
E(X)_\chi:= \left\{\psi\in E(X) \mid
  \alpha_t(\psi)=\chi(t)\cdot \psi\,\,
 \,\,\,   \forall\, t\in T \right\}.
$$
Since $T$ acts by $\star$-automorphisms, we have
\begin{equation}
  \label{eq:Fell_bundle_algebra2}
  A(X)_\chi\cdot E(X)_\eta \subseteq E(X)_{\chi\eta}
  \quad {\textrm and} \quad E(X)_\chi^*=E(X)_{\chi^{-1}}
  \qquad  \forall\, \chi,\eta\in\widehat{T}.
\end{equation}
Therefore the spaces \(E(X)_\chi\) for \(\chi\in\widehat{T}\) form a
{Fell bundle}~\(E(X)\) over~\(\widehat{T}\)
(see~\cite{fell_doran}); there is no continuity condition
because~\(\widehat{T}\) is discrete.
The completion of the direct sum is explained as before.
The representation theory of $T$ shows that
\(\bigoplus_{\chi\in\widehat{T}} E(X)_\chi\) is a
$T$-equivariant dense subspace
of~\(E(X)\), where $T$ acts on $ E(X)_\chi$ as follows:
$\hat \alpha_t(\phi_\chi(x)) = \chi(t) \phi_\chi(x)$ for all $t\in T, \, x\in X$.
Then $\widehat\bigoplus_{\chi \in \hat T} E(X)_\chi$
is the completion in the Hilbert $C^*$-norm of $E(X)$.

The action of $A(X)$ on $E(X)$ then also decomposes as,
$$
(\phi \psi)_\chi(x)= \sum_{\chi_1\chi_2=\chi} \phi_{\chi_1}(x)\psi_{\chi_2}(x)
$$
for $\chi_1,\chi_2, \chi \in \widehat T$, $\phi \in A(X)$ and $\psi \in E(X)$.

   Let  $\sigma\in C_b(X, Z^2(\ft{T},\torus))$ be a deformation parameter.
Then as in section 2, the parametrised strict deformation quantization of $A(X)$ can be
defined, and is denoted by $A(X)_\sigma$.

The action of $A(X)_\sigma$ on $E(X)$ can be defined by deforming the action  of $A(X)$ on $E(X)$,
$$
(\phi\star_\sigma \psi)_\chi(x) = \sum_{\chi_1\chi_2=\chi} \phi_{\chi_1}(x)\psi_{\chi_2}(x)
\sigma(x; \chi_1, \chi_2)
$$
for $\chi_1,\chi_2, \chi \in \widehat T$, $\phi \in A(X)_\sigma$ and $\psi \in E(X)$.
This is an action because $\sigma(x:, \cdot, \cdot)$ is a 2 cocycle for all $x\in X$.

This is the parametrised strict deformation quantization of $E(X)$,
denoted by $E(X)_\sigma$, which is a Hilbert $C^*$-module over $A(X)_\sigma$.

\begin{example}
Let $E\to X$ be a complex vector bundle over $X$. Consider
a smooth fiber bundle of smooth manifolds,
\begin{equation}
\xymatrix{ %gibberish as before
Z\ar[r]&Y\ar[d]^\pi\\
& X.
}
\end{equation}
Suppose there is a fibrewise action of a torus $T$ on $Y$. That is, assume that there is
an action of $T$ on $Y$ satisfying,
$$
\pi(t.y) = \pi(y), \qquad \forall\, t\in T, \, y\in Y.
$$
Let  $\sigma\in C_b(X, Z^2(\ft{T},\torus))$ be a deformation parameter.
$C_0(Y)$ is a $C^*$-bundle over $X$, and as in section 2, form the
parametrised strict deformation quantization $C_0(Y)_\sigma$.

Then sections $C_0(Y, \pi^*(E))$ is a Hilbert $C^*$-module over
the $C^*$-bundle $C_0(Y)$ over $X$, with a fibrewise $T$-action
compatible with the  fibrewise action of $T$ on $C_0(Y)$. Therefore
as above, we can construct a parametrised strict deformation quantization of  $C_0(Y, \pi^*(E))$,
denoted by $C_0(Y, \pi^*(E))_\sigma$, which is a Hilbert $C^*$-module over $C_0(Y)_\sigma$.

\end{example}

\bigskip 

\noindent{\bf Acknowledgements}. V.M. thanks the Australian Research Council for support, and
also thanks the hospitality at Oxford University where part of this research was completed.

\bigskip

\end{document}